\theoremstyle{plain}
\newtheorem{thm}{Theorem}[section]
\newtheorem{lem}{Lemma}
\theoremstyle{definition}
\numberwithin{equation}{section}
\title{Liouville theorem  of $ VT $ harmonic map from complete noncompact manifold into horoball}
\author{Xiang-Zhi Cao\thanks{School of information engineering, Nanjing Xiaozhuang University, Nanjing {\rm211171}, China.}\thanks{This work is surported by General Project of Natural Science(basic science) Research of 
		Colleges and Universities in Jiangsu province(Grant No.
		22KJD110004)} \thanks{Email:aaa7756kijlp@163.com }}
\begin{document}
	\maketitle

\begin{abstract}
In this paper, we mainly  study Liouville theorem of $ VT $ harmonic map from complete noncompact manifold into  horoball in Cartan-Hardmard manifold. To this aim, we will establish gradient estimates under some  condition on and $ V  $ and  $ T $.
\end{abstract}

\section{Introduction}

 One of  the most important map in geometry is harmonic map which is the generalization of geodesic and submanifold in geometry and plays  an important role in geometry and topology. For the research on existence, uniqueness and regularity of harmonic map between manifold, one can refer to the works(\cite{ES,Eells1978,Schoen1982},\cite{Ham}\cite{sacks}\cite{lin1999gradient}\cite{chang1992finite}\cite{ding1995energy}\cite{hartman1967homotopic})  and reference therein.

 In \cite{jost-yau}, Jost and yau studied the existence of Hermitian harmonic map form compact manifold (with or without boundary) to Riemannian manifold with nonnegative curvature. We  refer the intrested readers to \cite{Grunau2005,jost-yau,li2007hermitian,liu2014hermitian,ni1999hermitian} and reference therein  for the history of Hermitian harmonic map. In \cite{chen2015maximum}, Chen et al. introduced the notions $  V  $ harmonic map . It is obvious that  $ V $ harmonic map is the gerneralization of Hermitian harmonic map.   More  history of research about $ V $ harmonic map  can be found in  \cite{qiu2017heat,vharmonic,Chen2014OmoriYauMP} and reference therein.

Let $ (M,g) $  and  $(N,h)  $ be   Riemannian manifolds, $ u: (M,g)\to (N,h) $.  In 2020, Chen et al.\cite{Chen2020b} introduced the notion of $VT$-harmonic map which  is defined as 
$$\tau(u) + du(V) + {\rm Tr}_g T(du, du) = 0,$$
where $ V\in \Gamma(TM), T\in \Gamma(\otimes^{1,2}(TN)) .$
It is the generalization of $  V $ harmonic map. In   \cite{CaoXiangzhi2018}, we used the methods in \cite{hildebrandt1977existence,hildebrandt1976dirichlet,widman1} to study the existing theorem of VT harmonic from compact manifold with boundary into  manifold, if the tensor $ T $ and the  curvature operator of $ N $ satisfy certain conditons. Later, in \cite{MR4499521}, we obtained the existing theorem of VT harmonic map from compact manifold with boundary into regular ball of certain radius in Riemannian manifold. 
  When $ M $ is complete noncompact, in \cite{Chen2020b}, Chen et al. also derive the existence theorem imposing some conditons on  the tensor $ T $.

In \cite{Chen2020b}, Chen et al. give the following gradient estimates ,
\begin{thm}[c.f. Theorem 9 in \cite{Chen2020b}]\label{l2}
	 Let $\left(M^{m}, g\right)$ be a complete noncompact Riemannian manifold with
	\[
	\operatorname{Ric}_{V}:=\operatorname{Ric}-\frac{1}{2} L_{V} g \geq-A,
	\]
	where $A \geq 0$ is a constant, Ric is the Ricci curvature of $M$ and $L_{V}$ is the Lie derivative. Let $\left(N^{n}, \widetilde{g}\right)$ be a complete Riemannian manifold with sectional curvature bounded from above by a positive constant $\kappa$. Let $u: M \rightarrow N$ be a $V T$-harmonic map such that $u(M) \subset B_{\widetilde{R}}(p)$, where $B_{\widetilde{R}}(p)$ is a regular ball in $N$, i.e., disjoint from the cut locus of $p$ and $\widetilde{R}<\frac{\pi}{2 \sqrt{\kappa}}$. Suppose $\|V\|_{L^{\infty}}<+\infty,\|T\|_{L^{\infty}}<+\infty,\|\nabla T\|_{L^{\infty}}<+\infty$ and
	\begin{equation}\label{l1}
		\begin{split}
		\left(1+(m+1)^{2}-\frac{1}{(m+1)^{2}}\right)\|T\|_{L^{\infty}}^{2}+\frac{\sqrt{\kappa}}{\cos (\sqrt{\kappa} \widetilde{R})}\|T\|_{L^{\infty}}<\frac{\kappa}{\min \{m, n\}} .
		\end{split}
	\end{equation}

	Then, we have
	\[
	|\nabla u| \leq C_{6}(\sqrt{A}+1),
	\]
	where $C_{6}>0$ is a constant depending only on $m, n, \kappa, \tilde{R}, V, T$.
\end{thm}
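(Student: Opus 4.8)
The plan is to prove this by a Cheng--Yau/Schoen--Yau gradient estimate adapted to $VT$-harmonic maps: I would control the energy density $e(u)=\tfrac12|du|^2$ via its Bochner formula, played against an auxiliary function built from the distance to $p$ in the target, with the curvature hypothesis \eqref{l1} arranged to be precisely what forces the resulting quartic term to have a favourable sign. \emph{Step 1 (target auxiliary function).} Since $B_{\widetilde{R}}(p)$ is a regular ball with $\widetilde{R}<\tfrac{\pi}{2\sqrt\kappa}$ and $\operatorname{Sec}^N\le\kappa$, Hessian comparison on $N$ gives that $\phi:=\cos\bigl(\sqrt\kappa\,\rho_N(p,\cdot)\bigr)$ satisfies $\nabla^2\phi\le-\kappa\phi\,\widetilde g$ on $B_{\widetilde{R}}(p)$, with $c_0:=\cos(\sqrt\kappa\widetilde{R})\le\phi\le1$ and $|\nabla\phi|\le\sqrt\kappa$ there. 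Put $f:=\phi\circ u$ (smooth, as $u(M)$ is disjoint from the cut locus of $p$) and $\Delta^V:=\Delta+\langle V,\nabla\,\cdot\,\rangle$. The chain rule for the tension field together with the $VT$-harmonic equation $\tau(u)+du(V)=-\operatorname{Tr}_gT(du,du)$ gives
\[
\Delta^Vf=\operatorname{Tr}_g(\nabla^2\phi)(du,du)-\bigl\langle\nabla\phi,\operatorname{Tr}_gT(du,du)\bigr\rangle\le-\kappa f\,|du|^2+\sqrt\kappa\,\|T\|_{L^\infty}\,|du|^2 .
\]

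\emph{Step 2 (Bochner inequality for $e(u)$).} I would apply the Bochner--Weitzenb\"ock identity to $e(u)$ along a $VT$-harmonic map. Commuting derivatives and reorganising the $du(V)$ contributions through $L_Vg$ produces the combination $\operatorname{Ric}_V=\operatorname{Ric}-\tfrac12L_Vg\ge-A$; the target-curvature term is handled by $\sum_{i,j}\langle R^N(du(e_i),du(e_j))du(e_j),du(e_i)\rangle\le\kappa\bigl(1-\tfrac{1}{\min\{m,n\}}\bigr)|du|^4$, which follows from $\operatorname{Sec}^N\le\kappa$ and $\operatorname{tr}(B^2)\ge(\operatorname{tr}B)^2/\operatorname{rank}B$ applied to the positive semidefinite matrix $B=(du)^{*}du$ (rank at most $\min\{m,n\}$); and differentiating $du(V)$ and $\operatorname{Tr}_gT(du,du)$ contributes terms controlled by $\|V\|_{L^\infty}|du|\,|\nabla du|$, $\|T\|_{L^\infty}|du|^2|\nabla du|$ and $\|\nabla T\|_{L^\infty}|du|^3$. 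Applying Young's inequality (keeping a definite fraction of $|\nabla du|^2$ to absorb every $|\nabla du|$-cross term, pushing the $\|\nabla T\|_{L^\infty}$-term into the lower-order part, which is harmless since \eqref{l1} is strict, and choosing the remaining weights so the quartic coefficient is no larger than needed) I obtain
\[
\tfrac12\Delta^V|du|^2\ge\tfrac12|\nabla du|^2-C_1(A+1)(|du|^2+1)-\Bigl[\kappa\bigl(1-\tfrac{1}{\min\{m,n\}}\bigr)+\bigl(1+(m+1)^2-\tfrac{1}{(m+1)^2}\bigr)\|T\|_{L^\infty}^2\Bigr]|du|^4 ,
\]
with $C_1$ depending only on $m,n,\|V\|_{L^\infty},\|T\|_{L^\infty},\|\nabla T\|_{L^\infty}$; the exact coefficient of $|du|^4$ is what makes \eqref{l1} the sharp hypothesis.

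\emph{Step 3 (quotient and maximum principle).} Set $F:=|du|^2/f^2$, so $c_0^2F\le|du|^2\le F$. Fix $x_0\in M$, $a>0$, and a cutoff $\eta$ with $\eta\equiv1$ on $B_a(x_0)$, $\operatorname{supp}\eta\subset B_{2a}(x_0)$, $|\nabla\eta|^2\le C\eta/a^2$ and $\Delta^V\eta\ge-C(1+\sqrt A\,a)/a^2$ — the last estimate coming from the Laplacian comparison for $\Delta^V$ valid under $\operatorname{Ric}_V\ge-A$ together with $\|V\|_{L^\infty}<\infty$ (with Calabi's trick at the cut locus of $x_0$). Let $x_1$ be an interior maximum point of $\eta^2F$; if the maximum is $0$ we are done, so assume $F(x_1)>0$. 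At $x_1$ one has $\nabla F=-2F\nabla\eta/\eta$ and $\Delta^V(\eta^2F)\le0$; expanding the latter, inserting Steps 1--2 and using Young's inequality on the cutoff cross terms, the crucial point is that $-2|du|^2\Delta^Vf/f^3$ contributes $+2\kappa|du|^2F$, which combines with the $-2\kappa\bigl(1-\tfrac{1}{\min\{m,n\}}\bigr)|du|^2F$ coming from $\Delta^V|du|^2/f^2$ to leave the favourable term $\tfrac{2\kappa}{\min\{m,n\}}|du|^2F$, while the $T$-correction in $\Delta^Vf$ and the bound $1/f\le1/c_0$ produce only $-\tfrac{2\sqrt\kappa}{c_0}\|T\|_{L^\infty}|du|^2F$. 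Collecting terms yields, at $x_1$,
\[
\Bigl[\tfrac{\kappa}{\min\{m,n\}}-\tfrac{\sqrt\kappa}{\cos(\sqrt\kappa\widetilde{R})}\|T\|_{L^\infty}-\bigl(1+(m+1)^2-\tfrac{1}{(m+1)^2}\bigr)\|T\|_{L^\infty}^2\Bigr]\,\eta^2|du|^2F\le C\bigl(A+1+\tfrac{1}{a^2}\bigr)F .
\]
By \eqref{l1} the bracket is a positive constant $\delta_0$; dividing by $F$, using $f\ge c_0$ and $\eta\equiv1$ on $B_a(x_0)$, gives $F\le C\delta_0^{-1}c_0^{-2}(A+1+a^{-2})$ on $B_a(x_0)$, and letting $a\to\infty$ (permissible since the complete noncompact $M$ is exhausted by geodesic balls) gives $F\le C\delta_0^{-1}c_0^{-2}(A+1)$ on $M$. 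Hence $|du|^2\le F\le C\delta_0^{-1}c_0^{-2}(A+1)$, i.e.\ $|\nabla u|\le C_6(\sqrt A+1)$ with $C_6$ depending only on $m,n,\kappa,\widetilde{R},V,T$.

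\emph{Main obstacle.} The delicate point is Step 2: the Bochner identity for $VT$-harmonic maps carries several terms absent from the classical case, and the Young weights must be distributed so that \emph{simultaneously} enough of $|\nabla du|^2$ survives to dominate every $|\nabla du|$-cross term and the coefficient of $|du|^4$ is no worse than $\kappa\bigl(1-\tfrac{1}{\min\{m,n\}}\bigr)+\bigl(1+(m+1)^2-\tfrac{1}{(m+1)^2}\bigr)\|T\|_{L^\infty}^2$; otherwise \eqref{l1} would not make $\delta_0>0$. A secondary technical point is justifying the $\Delta^V$ Laplacian/cutoff estimate under only $\operatorname{Ric}_V\ge-A$ and $\|V\|_{L^\infty}<\infty$.
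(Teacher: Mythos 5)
You should first note that the paper you are writing into does not prove this statement at all: Theorem \ref{l2} is quoted verbatim as Theorem 9 of \cite{Chen2020b} purely as background, so there is no ``paper's own proof'' to compare with, and your proposal has to be judged against the standard proof scheme of \cite{Chen2020b} (Bochner formula for $\Delta_V e(u)$ with $\operatorname{Ric}_V$, a target auxiliary function from Hessian comparison in the regular ball, a cutoff and the maximum principle). Your Steps 1--2 follow that scheme, and your curvature estimate $\sum_{i,j}\langle R^N(du(e_i),du(e_j))du(e_j),du(e_i)\rangle\le \kappa\bigl(1-\tfrac{1}{\min\{m,n\}}\bigr)|du|^4$ is correct; the exact coefficient $\bigl(1+(m+1)^2-\tfrac{1}{(m+1)^2}\bigr)\|T\|_{L^\infty}^2$ in Step 2 is asserted rather than derived, but that is a bookkeeping issue you flag yourself.

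The genuine gap is in Step 3. With the naive quotient $F=|du|^2/f^2$, $f=\cos(\sqrt\kappa\,\rho_N\circ u)$, the second-order expansion of $\Delta^V F$ contains, besides the terms you list, the contribution $-\tfrac{4\nabla|du|^2\cdot\nabla f}{f^3}+\tfrac{6|du|^2|\nabla f|^2}{f^4}$; after using $\nabla F=-2F\nabla\eta/\eta$ at the maximum point this leaves an irreducible term $-\tfrac{2|du|^2|\nabla f|^2}{f^4}\ \ge\ -2\kappa\,\tan^2(\sqrt\kappa\,\rho\circ u)\,|du|^2F$, which is quartic in $du$ (it cannot be absorbed by the retained $|\nabla du|^2$) and does not appear in your final bracket. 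Since $\widetilde R$ is only assumed $<\tfrac{\pi}{2\sqrt\kappa}$, the factor $\tan^2(\sqrt\kappa\widetilde R)$ can be arbitrarily large, so this term swamps the good term $\tfrac{2\kappa}{\min\{m,n\}}|du|^2F$; note that hypothesis \eqref{l1} contains no smallness of $\tan(\sqrt\kappa\widetilde R)$, so the positivity of your constant $\delta_0$ simply does not follow from the computation as written. This is exactly the point where the actual proofs for regular balls (J\"ager--Kaul/Hildebrandt--Jost--Widman type arguments, and the proof of Theorem 9 in \cite{Chen2020b}, cf.\ also \cite{qiu2017heat}) must choose the denominator (auxiliary composed function) more carefully than $\cos(\sqrt\kappa\,\rho)\circ u$, precisely so that the $|\nabla f|^2$-term is dominated by the Hessian term for every $\widetilde R<\tfrac{\pi}{2\sqrt\kappa}$. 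Until you either exhibit such an auxiliary function and redo the balance, or show how the $|\nabla f|^2$-term is otherwise controlled, the argument does not yield the stated estimate.
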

 Liouville type theroem for $ VT $ harmonic map cann't be obtained from the above  upper bound  . In  \cite{shen1995liouville}, Shen first considered Liouville type theorem of harmonic map $u$  into horoball. In \cite{chen1998harmonic}, Chen obtained Liouville theorem for harmonic map with potential into regular ball or horoball.  In this paper, we give sufficient conditions to derive Liouville theorem for $ VT $ harmonic map into horoball in Cartan-Hardmard manifold. We will give two methods which is adapted from that in  \cite{Chen2020b} \cite{qiu2017heat}\cite{chen1998harmonic}\cite{shen1995liouville}.

\section{ Main theorem and its proofs }
Before proving our main theorems, let us recall same background of horoball in Cartan-Hardmard manifold $ N $. Let $c:[0,+\infty) \rightarrow N$ be a unit speed geodesic. We call $\bar{B}_{c}:=\bigcup_{t>0} \bar{B}_{c(t)}(t)$ the horoball with respect to $c$ centered at $c(+\infty)$. The Busemann function (with respect to $c$ ) is $B(\cdot)=\lim _{t \rightarrow+\infty}\left[t-\mathrm{d}_{N}(c(t), \cdot)\right]$. It is known that $B$ is of $C^{2},|\nabla |=1$ and $B>0$ on $\bar{B}_{c}$ (see \cite{chen1998harmonic}).

\begin{lem}[c.f. Lemma 2.1 in\cite{arxivecao202302}  or \cite{CaoXiangzhi2018}]\label{bochner}
	Let $\left(M^{m}, g\right)$ and $\left(N^{n}, h\right)$ be Riemannian manifolds.  Suppose $ u $ is a $ V T$ harmonic map from $M$ to $N$, then
	\begin{equation}\label{66}
		\begin{split}
			&\frac{1}{2} \left( \Delta_{V} -\frac{\partial }{\partial t}\right) e(u)\\
			\geq& 	(1-\epsilon)|\nabla du|^{2}-\sum_{\beta,\alpha}\bigg(\langle R^{N}(du(e_{\beta}),du(e_\alpha))du(e_{\beta}),du(e_{\alpha})\rangle+\frac{1}{2\epsilon} |T(du(e_{\beta})^{\sharp},du(e_{\alpha}))|^{2}\\ 
			&+\langle(\nabla T)(du(e_{\beta})^{\sharp} ,du(e_{\alpha}),du(e_{\alpha})),du(e_{\beta})\rangle\bigg) +\sum_{\beta}\langle du(\mathrm{Ric}_V(e_{\beta})),du(e_{\beta})\rangle,
		\end{split}
	\end{equation}
	where $ \{e_{\alpha}\}_{\alpha=1}^{m}$ is the orthonormal frame of $ M $ , $\epsilon$  is a arbitrary small  positive constant, $Ric_{V}:=$ Ric $-\frac{1}{2} L_{V} g$, where Ric is the Ricci curvature of $M$ and $L_{V}$ is the Lie derivative.
	
\end{lem}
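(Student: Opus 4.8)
The plan is to obtain \eqref{66} from the classical Bochner--Weitzenb\"ock formula for maps between Riemannian manifolds \cite{ES}, inserting the $VT$-harmonic map equation and then regrouping the lower-order terms that $V$ and $T$ create; all computations are local, at a fixed point $x\in M$, in a local orthonormal frame $\{e_{\alpha}\}$ with $\nabla_{e_{\alpha}}e_{\beta}(x)=0$.

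\textbf{Step 1: reduction to the Bochner identity.} First I would recall that for any smooth map $u$,
\begin{equation*}
\tfrac12\Delta|du|^{2}=|\nabla du|^{2}+\sum_{\alpha}\langle\nabla_{e_{\alpha}}\tau(u),du(e_{\alpha})\rangle+\sum_{\alpha}\langle du(\mathrm{Ric}(e_{\alpha})),du(e_{\alpha})\rangle-\sum_{\alpha,\beta}\langle R^{N}(du(e_{\beta}),du(e_{\alpha}))du(e_{\beta}),du(e_{\alpha})\rangle,
\end{equation*}
with $\tau(u)=\mathrm{Tr}_{g}\nabla du$. Writing out $\tfrac12(\Delta_{V}-\partial_{t})e(u)$ with $e(u)=|du|^{2}$ and commuting derivatives, one has $\tfrac12\langle V,\nabla|du|^{2}\rangle=\sum_{\alpha}\langle\nabla du(V,e_{\alpha}),du(e_{\alpha})\rangle$ and $\sum_{\alpha}\langle\nabla_{e_{\alpha}}(\partial_{t}u),du(e_{\alpha})\rangle=\tfrac12\partial_{t}|du|^{2}$ (the $\partial_{t}$-terms vanishing for a static $VT$-harmonic map, and being absorbed by the evolution equation if one works along the associated heat flow). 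Substituting the $VT$-harmonic equation in the form $\tau(u)-\partial_{t}u=-du(V)-\mathrm{Tr}_{g}T(du,du)$ turns $\sum_{\alpha}\langle\nabla_{e_{\alpha}}\tau(u),du(e_{\alpha})\rangle$ into $-\sum_{\alpha}\langle\nabla_{e_{\alpha}}(du(V)),du(e_{\alpha})\rangle-\sum_{\alpha}\langle\nabla_{e_{\alpha}}(\mathrm{Tr}_{g}T(du,du)),du(e_{\alpha})\rangle+\sum_{\alpha}\langle\nabla_{e_{\alpha}}(\partial_{t}u),du(e_{\alpha})\rangle$, and the last summand cancels the $\partial_{t}$ contribution.

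\textbf{Step 2: the $V$-term yields $\mathrm{Ric}_{V}$.} Expanding $\nabla_{e_{\alpha}}(du(V))=\nabla du(e_{\alpha},V)+du(\nabla_{e_{\alpha}}V)$ and using the symmetry of the second fundamental form $\nabla du$, the terms $\nabla du(e_{\alpha},V)$ exactly cancel the extra $\nabla du(V,e_{\alpha})$ coming from the drift operator, leaving $-\sum_{\alpha}\langle du(\nabla_{e_{\alpha}}V),du(e_{\alpha})\rangle$. Since $\langle du(e_{\alpha}),du(e_{\beta})\rangle$ is symmetric in $\alpha,\beta$, this equals $-\tfrac12\sum_{\alpha,\beta}(L_{V}g)(e_{\alpha},e_{\beta})\langle du(e_{\alpha}),du(e_{\beta})\rangle$, which combines with $\sum_{\alpha}\langle du(\mathrm{Ric}(e_{\alpha})),du(e_{\alpha})\rangle$ to give the term $\sum_{\beta}\langle du(\mathrm{Ric}_{V}(e_{\beta})),du(e_{\beta})\rangle$ of \eqref{66}.

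\textbf{Step 3: the $T$-term, the $\epsilon$-splitting, and the main obstacle.} Since $T\in\Gamma(\otimes^{1,2}(TN))$ is pulled back along $u$, differentiating $\mathrm{Tr}_{g}T(du,du)=\sum_{\beta}T(du(e_{\beta}),du(e_{\beta}))$ gives, at $x$,
\begin{equation*}
\nabla_{e_{\alpha}}\big(\mathrm{Tr}_{g}T(du,du)\big)=\sum_{\beta}\Big[(\nabla_{du(e_{\alpha})}T)(du(e_{\beta}),du(e_{\beta}))+T\big(\nabla du(e_{\alpha},e_{\beta}),du(e_{\beta})\big)+T\big(du(e_{\beta}),\nabla du(e_{\alpha},e_{\beta})\big)\Big].
\end{equation*}
Pairing with $du(e_{\alpha})$ and summing, the first group is exactly the $\nabla T$-term in \eqref{66} (up to relabeling $\alpha\leftrightarrow\beta$ and writing the differentiation direction via the musical isomorphism $\sharp$), while in the other two groups $\nabla du$ enters linearly, so Young's inequality bounds them by $\epsilon|\nabla du|^{2}$ together with a term controlled by $\tfrac{1}{2\epsilon}\sum_{\alpha,\beta}|T(du(e_{\beta})^{\sharp},du(e_{\alpha}))|^{2}$. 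Absorbing $\epsilon|\nabla du|^{2}$ into the $|\nabla du|^{2}$ of Step 1 produces the coefficient $(1-\epsilon)$, and collecting everything (we only ever discarded $\epsilon|\nabla du|^{2}$) gives \eqref{66}. The genuinely delicate point is this last step: one must keep careful track of which slots of $T$ and $\nabla T$ are contracted, how the pullback connection commutes with $\mathrm{Tr}_{g}$, the precise meaning of the raised-index notation $du(e_{\beta})^{\sharp}$, and the choice of Young weight that splits off exactly $\epsilon|\nabla du|^{2}$ while leaving the constant $\tfrac{1}{2\epsilon}$. Everything else is a routine rearrangement of the classical identity together with the $V$-computation of Step 2.
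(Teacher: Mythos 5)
Your proposal is essentially the standard derivation that the cited sources use: the paper itself gives no proof of Lemma \ref{bochner} (it is quoted from Lemma 2.1 of \cite{arxivecao202302}/\cite{CaoXiangzhi2018}), and your route --- Eells--Sampson Bochner identity, substitution of $\tau(u)=-du(V)-\mathrm{Tr}_gT(du,du)$, cancellation of the $\nabla du(V,e_\alpha)$ terms against the drift $\tfrac12\langle V,\nabla e(u)\rangle$ to produce $\mathrm{Ric}_V$, and a Young-type splitting of the $T(\nabla du,\cdot)$ cross terms --- is exactly how that Bochner inequality is obtained. The only piece you leave unfinished is the one you flag yourself: differentiating $\mathrm{Tr}_gT(du,du)$ produces \emph{two} cross terms $\langle T(\nabla du(e_\alpha,e_\beta),du(e_\beta)),du(e_\alpha)\rangle$ and $\langle T(du(e_\beta),\nabla du(e_\alpha,e_\beta)),du(e_\alpha)\rangle$, so to land on precisely the coefficients $(1-\epsilon)$ and $\tfrac{1}{2\epsilon}|T(du(e_\beta)^{\sharp},du(e_\alpha))|^{2}$ of \eqref{66} you must carry out the dualization (the $\sharp$ bookkeeping) and the choice of Young weights explicitly, but this is a verification of constants rather than a missing idea.
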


Now we begin to state our first result and give its proof.

\begin{thm}\label{thm2}
	Let $\left(M^{m}, g\right)$ be a complete noncompact Riemannian manifold with
	\[
	\operatorname{Ric}_{V} \geq -A,
	\]
	where $A \geq 0$ is a constant, Ric is the Ricci curvature of $M$ and $L_{V}$ is the Lie derivative. Let $\left(N^{n}, h\right)$ be a complete Riemannian manifold with sectional curvature bounded from above by $ -\kappa^2 $. Let $u: M\rightarrow N$ be a $V T$-harmonic map such that $u(M) \subset B_c$, where $B_c$ is a horoball in $N$ with respect to some unit speed geodesic $ c $. Suppose $\|V\|_{L^{\infty}}<+\infty$ and	
	\begin{equation*}
		\frac{1}{2\epsilon} |T(X^{\sharp},Y)|^{2}
		+\langle(\nabla T)(X^{\sharp} ,Y,Z),X\rangle \leq 0, \quad \text{and} \quad \|T\|_{\infty}<1.
	\end{equation*} 	
 where $ \epsilon $ is small positive constant and $ X,Y,Z $ is the vector field on $ N $.	Then, we have for any $ \tilde{p} \in M $ and $  a>0 $, we have 
	\[
\sup_{B_{\frac{a}{2}}(\tilde{p})}	|\nabla u| \leq C(\sqrt{A}+\frac{1}{a}+\frac{1}{a^{\frac{1}{2}}}),
	\]
	where $C>0$ is a constant depending only on $m, n, \kappa, V, T$.
\end{thm}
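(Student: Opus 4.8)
The plan is to run a standard Cheng--Yau / Schoen--Yau type local gradient estimate, using the Busemann function of the horoball as an auxiliary function to control the Bochner term coming from the target curvature. Concretely, set $B$ to be the Busemann function of the horoball $B_c$, so that $B>0$ on $u(M)$, $|\nabla B|=1$, and since $N$ is Cartan--Hadamard with sectional curvature $\le -\kappa^2$, the Hessian comparison theorem gives $\nabla^2 B \ge \kappa\,(h - dB\otimes dB)$ (this is the horoball analogue of $\nabla^2(\text{dist})$; it is exactly the estimate used in \cite{chen1998harmonic,shen1995liouville}). Then consider the composite $\varphi := B\circ u : M \to (0,\infty)$ and compute $\Delta_V \varphi$ using the $VT$-harmonic map equation: one gets
\begin{equation*}
	\Delta_V\varphi = \nabla^2 B(du,du) + dB\big(\tau(u)+du(V)\big) = \nabla^2 B(du,du) - dB\big({\rm Tr}_g T(du,du)\big),
\end{equation*}
so that $\Delta_V\varphi \ge \kappa\big(e(u) - |dB(du)|^2\big) - dB({\rm Tr}_g T(du,du)) \ge \kappa\, e(u) - \kappa\, e(u)\cdot 0 - \|T\|_\infty\, e(u)\cdot(\text{something})$; more carefully, $|dB(du)|^2 \le e(u)$ only pointwise with the full $|du|^2$, so the gain $\kappa(e(u)-|dB(du)|^2)$ is not immediately positive. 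This is the one delicate point, and the standard fix (following Chen) is to work instead with a slightly rescaled/penalized version: use the function $\psi := -\log(\text{const} - \varphi)$ or an auxiliary combination so that the good term $\kappa e(u)$ survives after absorbing $|dB(du)|^2$. I would follow whichever normalization \cite{chen1998harmonic} uses, since the horoball case makes $\nabla^2 B$ positive semidefinite in all directions transverse to $\nabla B$, which is what is needed.

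Next I would set up the local maximum-principle argument. Fix $\tilde p\in M$ and $a>0$, let $r(x)=d_M(\tilde p,x)$, and choose a cutoff $\eta$ supported in $B_a(\tilde p)$ with $\eta\equiv 1$ on $B_{a/2}(\tilde p)$, $|\nabla\eta|^2/\eta \le C/a^2$ and $\Delta_V\eta \ge -C(\sqrt A + 1/a)/a$ (the latter uses the Laplacian comparison theorem with $\mathrm{Ric}_V\ge -A$, which is available since $V$ is bounded; this is exactly the ingredient behind the $\sqrt A$ and $1/a$ in the conclusion). Consider the test function
\begin{equation*}
	G := \eta^2 \,\frac{e(u)}{(b - \varphi)^2}\qquad\text{or}\qquad G := \eta^2\, e(u)\, e^{-\lambda\varphi},
\end{equation*}
where $b>\sup_M\varphi$ is not available in general (the horoball is unbounded), so one really must use the exponential weight $e^{-\lambda\varphi}$ with $\lambda>0$ chosen large depending on $\kappa$; this is precisely the device that makes the horoball (as opposed to a regular ball) tractable. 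At an interior maximum point $x_0$ of $G$, apply $\Delta_V$ to $\log G = 2\log\eta + \log e(u) - \lambda\varphi$, use $\nabla\log G=0$ and $\Delta_V\log G\le 0$.

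The heart of the computation is the Bochner inequality, Lemma \ref{bochner}: at $x_0$,
\begin{equation*}
	\tfrac12\Delta_V e(u) \ge (1-\epsilon)|\nabla du|^2 - \sum_{\alpha,\beta}\langle R^N(du(e_\beta),du(e_\alpha))du(e_\beta),du(e_\alpha)\rangle - \sum_{\alpha,\beta}\Big(\tfrac1{2\epsilon}|T(du(e_\beta)^\sharp,du(e_\alpha))|^2 + \langle(\nabla T)(\cdots),du(e_\beta)\rangle\Big) - A\, e(u),
\end{equation*}
and here the two $T$-terms are $\le 0$ by the structural hypothesis on $T$, while the curvature term is bounded: since ${\rm sec}_N\le-\kappa^2<0$ it is actually $\ge \kappa^2\big(e(u)^2 - |\langle du(e_\alpha),du(e_\beta)\rangle|^2\big)\ge 0$, so it only helps — but I won't need that sign, only that it is not too negative; more importantly the $-\lambda\Delta_V\varphi$ term contributes $-\lambda\kappa\, e(u)$ (with the right sign, negative, i.e. a genuine good term on the correct side after moving it), which together with the Kato inequality $|\nabla du|^2 \ge |\nabla\sqrt{2e(u)}|^2 = |\nabla e(u)|^2/(2e(u))$ and the gradient identity $\nabla e(u)/e(u) = -2\nabla\eta/\eta + \lambda\nabla\varphi$ at $x_0$ lets one close the estimate. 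Collecting terms yields an inequality of the shape
\begin{equation*}
	c_1\, e(u)(x_0) \le c_2\Big(A + \tfrac1{a^2} + \tfrac1{a}\sqrt{e(u)(x_0)} + \|T\|_\infty^2\, e(u)(x_0) + \cdots\Big),
\end{equation*}
and because $\|T\|_\infty<1$ the bad $\|T\|_\infty^2 e(u)$ term can be absorbed into the left side after choosing $\epsilon$ and $\lambda$ appropriately (this is where $\|T\|_\infty<1$ is used, paralleling condition \eqref{l1} in Theorem \ref{l2}). Solving the resulting quadratic inequality in $\sqrt{e(u)(x_0)}$ gives $\sqrt{e(u)(x_0)}\le C(\sqrt A + 1/a + 1/\sqrt a)$, and then $G(x_0)$ bounds $G$ everywhere, so on $B_{a/2}(\tilde p)$ (where $\eta\equiv1$ and $e^{-\lambda\varphi}$ is bounded below by a constant depending on $\sup_{B_{a/2}}\varphi$ — here one should arrange the argument, as in Chen, so that $\varphi$ on the ball is controlled a priori, or alternatively run the estimate with the ratio $e(u)/(b-\varphi)^2$ on the portion of the horoball actually met) one concludes $|\nabla u| = \sqrt{2e(u)} \le C(\sqrt A + 1/a + 1/\sqrt a)$.

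The main obstacle I anticipate is the bookkeeping in the horoball case: unlike a regular ball, $B\circ u$ need not be bounded on $M$, so one cannot simply use $1/(b-\varphi)^2$ globally, and the correct replacement — the exponential weight $e^{-\lambda\varphi}$, or a localized version of $1/(b-\varphi)^2$ with $b$ depending on $\sup_{B_a(\tilde p)}\varphi$ — has to be chosen so that (i) the Hessian comparison $\nabla^2 B\ge\kappa(h-dB\otimes dB)$ produces a term that genuinely dominates, with a definite positive constant, the cross term $|dB(du)|^2$ that appears with the wrong sign, and (ii) the weight's own gradient does not reintroduce an uncontrollable $e(u)^{3/2}$ term. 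Balancing $\epsilon$ (from Bochner), $\lambda$ (the weight exponent), and the cutoff scale $a$ so that the coefficient of $e(u)(x_0)$ on the right stays strictly below that on the left, using only $\|T\|_\infty<1$ and the sign condition on $T$, is the crux; everything else is the routine Cheng--Yau machinery together with the comparison theorems already invoked implicitly by the hypothesis $\mathrm{Ric}_V\ge-A$.
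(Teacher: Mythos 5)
Your overall framework (the Bochner inequality of Lemma \ref{bochner} with the sign hypotheses killing the $T$-terms, Hessian comparison for the Busemann function on the target, Laplacian comparison from $\operatorname{Ric}_V\ge-A$ on the domain, and a local maximum principle resolved by a quadratic inequality) is the same as the paper's, but your key device is different and the point you yourself call ``the crux'' is left genuinely unresolved. The paper follows \cite{shen1995liouville,chen1998harmonic}: it estimates the \emph{quotient} $\phi=|\nabla u|/(B\circ u)$ with the cutoff $(a^2-r^2)$, and the quotient structure is precisely what repairs the non-definiteness you correctly flagged. With the paper's convention $B=\lim_t[t-d_N(c(t),\cdot)]$ one has $\operatorname{Hess}B\le-\kappa\,(h-dB\otimes dB)$ (your displayed comparison $\nabla^2B\ge\kappa(h-dB\otimes dB)$ has the opposite direction for this convention), so $\Delta_V(B\circ u)\le-\kappa\bigl(|du|^2-|\nabla(B\circ u)|^2\bigr)+\|T\|_\infty e(u)$, and the bracket can vanish when $du$ points along $\nabla B$. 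In the quotient computation an extra positive term $\varepsilon\,|\nabla(B\circ u)|^2/(B\circ u)^2$ is generated (Shen's manipulation of the cross term; in the paper's second proof it appears as $2\varepsilon A|\nabla b|^2/b^2$), and it is the exact cancellation $|\nabla b|^2-\bigl(-2e+|\nabla b|^2+\|T\|_\infty e\bigr)=(2-\|T\|_\infty)e$ that produces a definite quadratic term; this is Chen's device and it is tied to the quotient. Your exponential weight $e^{-\lambda(B\circ u)}$ provides no such cancellation: the good term it yields is $\lambda\bigl(|du|^2-|\nabla(B\circ u)|^2-\|T\|_\infty e\bigr)$, which is not bounded below by a definite multiple of $\lambda e$, while the gradient identity at the maximum adds a further $-C\lambda^2|\nabla(B\circ u)|^2$ of the same bad sign. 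Deferring to ``whichever normalization Chen uses'' does not close this, because Chen's fix does not transfer to the weight.

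The second gap is the endgame. A maximum-principle bound on $\eta^2e(u)e^{-\lambda(B\circ u)}$ gives on $B_{a/2}(\tilde p)$ only $e(u)\le e(u)(x_0)\,e^{\lambda (B\circ u)}$, and $B\circ u$ is not assumed bounded (the horoball is unbounded and no a priori bound on $u$ inside it is given), so the asserted bound on $\sup_{B_{a/2}}|\nabla u|$ does not follow; you acknowledge this and leave it open, which means the proposal as written does not reach the stated conclusion. What the paper actually establishes by the quotient route is the bound on $|\nabla u|/(B\circ u)$ --- its closing estimate is $|\nabla u(x_0)|\le C(\sqrt A+1/a+1/\sqrt a)\,d(u(x_0),c(0))$ --- so if you replace your weight by the ratio (your own fallback suggestion) you are essentially reproducing the paper's argument, and you should then state the conclusion in that ratio form rather than as a bound on $|\nabla u|$ alone. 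Finally, the balancing of $\epsilon$, $\lambda$ and the refined Kato constant, which you assert can be done, is exactly where the argument must be carried out explicitly; in the paper this is the choice $\delta=1/(2mn)$, $2\varepsilon_1=\eta$ making $\varepsilon_1=(1-\epsilon)(1+\delta)-1>0$.
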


\begin{proof} we use  the method in \cite{shen1995liouville}.  Let $ e=|du |^2 $,  by \eqref{66} and Kato's inequality, we have 
	$$  \frac{1}{2} \Delta_V e  \geq (1-\epsilon)(1+\delta)|\nabla \sqrt{e}|^{2}-K e, $$
	where $\delta=1 / 2 m n$ . Therefore,
\begin{equation}\label{ccc}
	\begin{split}
			|\nabla u| \Delta_V|\nabla u| \geq\left.\left[ (1-\epsilon)(1+\delta)-1\right] |\nabla| \nabla u\right|^{2}-K|\nabla u|^{2}.
	\end{split}
\end{equation}
	Next, let $\phi= \frac{|\nabla u|}{B \circ u} $, where $B$ is the Buseman function on $B_{c}$. We  may assume that  $  B>1 $.  Then we have
	\[
	\Delta_V|\nabla u|=B \circ u \Delta_V\phi+\phi \Delta_V(B \circ u)+2 \nabla(B \circ u) \cdot \nabla \phi.
	\]
	So
	\[
	\Delta_V \phi=\frac{\Delta_V|\nabla u|}{B \circ u}-\frac{\phi \Delta_V(B \circ u)}{B \circ u}-2 \frac{\nabla(B \circ u) \cdot \nabla \phi}{B \circ u}.
	\]
	By  (\ref{ccc}), we get
\begin{equation}\label{equ5}
	\begin{split}
		\Delta_V \phi \geq \varepsilon_1 \frac{\left.|\nabla| \nabla u\right|^{2}}{|\nabla u| B \circ u}-K \phi-\frac{\phi \Delta_V(B \circ u)}{B \circ u}-2 \frac{\nabla(B \circ u) \cdot \nabla \phi}{B \circ u},
	\end{split}
\end{equation}
	where $\varepsilon_1= (1-\epsilon)(1+\delta)-1 $.
	
	We know that by (\cite{shen1995liouville}) 
	\begin{equation*}
		\begin{split}
			-2 \frac{\nabla(B \circ u) \cdot \nabla \phi}{B \circ u}
					\geq-(2-\eta) \frac{\nabla(B \circ u) \cdot \nabla \phi}{B \circ u}-\eta \frac{|\nabla(B \circ u)||\nabla| \nabla u||}{(B \circ u)^{2}}
			+\eta \frac{|\nabla(B \circ u)|^{2}|\nabla u|}{(B \circ u)^{3}}	.
		\end{split}
	\end{equation*}
By setting $2 \varepsilon_1=\eta$, we have
	$$ -\eta \frac{|\nabla(B \circ u)||\nabla| \nabla u||}{(B \circ u)^{2}} \geq-\varepsilon_1 \left(\frac{|\nabla| \nabla u \|^{2}}{(B \circ u)|\nabla u|}+\frac{|\nabla(B \circ u)|^{2}|\nabla u|}{(B \circ u)^{3}}\right). $$
	Therefore, by \eqref{equ5}, we get 
	$$ \Delta_V \phi \geq-(2-2 \varepsilon_1) \frac{\nabla(B \circ u) \cdot \nabla \phi}{B \circ u}+\varepsilon_1 \frac{|\nabla u||\nabla(B \circ u)|^{2}}{(B \circ u)^{3}}	-K\phi-\frac{\phi \Delta_V(B \circ u)}{B \circ u}. $$
	Now for  a fixed point $x_{0} \in M$, we can define a function $F$ on $B_{a}\left(x_{0}\right)$ by
	\[
	F(x)=\left(a^{2}-r^{2}\right) \phi(x)=\left(a^{2}-r^{2}\right) \frac{|\nabla u|}{B \circ u},
	\]
	where $r(x)=\operatorname{dist}_{g}\left(x_{0}, x\right)$. It is easy to see that if $\nabla u \not \equiv 0$, then $F$ must achieve its maximum at some interior point $x^{*}$. We may assume that $r$ is twice differentiable near $x^{*}$. By the maximum principle, we have
	\[
	\nabla F\left(x^{*}\right)=0, \quad  and  \quad 	\Delta_V F\left(x^{*}\right) \leq 0.
	\]
At $x^{*}$, we have 
	\[
	\frac{\nabla r^{2}}{a^{2}-r^{2}}=\frac{\nabla \phi}{\phi},
	\]
\begin{equation}\label{equ6}
	\begin{split}
		-\frac{\Delta_V r^{2}}{a^{2}-r^{2}}+\frac{\Delta_V \phi}{\phi}-\frac{2 \nabla r^{2} \circ \nabla \phi}{\left(a^{2}-r^{2}\right) \phi} \leq 0.
	\end{split}
\end{equation}
We know that (cf. \cite{Chen2020b} )
\[
\Delta_{V} r^{2}=2 r \Delta_{V} r+2|\nabla r|^{2} \leq 2 r\left(A\left(r-r_{0}\right)+\widetilde{C}_{0}\right)+2,
\]
where $r_{0}>0$ is a sufficiently small constant and $\widetilde{C}_{0}:=\max _{\partial B_{r_{0}}(\tilde{p})} \Delta_{V} r$. By \eqref{equ6}, we get
	\begin{equation}\label{ccc1}
		\begin{split}
			0 \geq & \frac{\Delta_V\phi}{\phi}-\frac{2 r\left(A\left(r-r_{0}\right)+\widetilde{C}_{0}\right)+2}{a^{2}-r^{2}}-\frac{8 r^{2}}{\left(a^{2}-r^{2}\right)^{2}} \\
			\geq &-(2-2 \varepsilon) \frac{\nabla(B \circ u) \cdot \nabla \phi}{\phi(B \circ f)}+\varepsilon \frac{|\nabla(B \circ u)|^{2}}{(B \circ u)^{2}}-K-\frac{\Delta_V(B \circ u)}{(B \circ u)} \\
			&-\frac{\left.(2 r\left(A\left(r-r_{0}\right)+\widetilde{C}_{0}\right)+2)\left(a^{2}-r^{2}\right)+8 r^{2}\right)}{\left(a^{2}-r^{2}\right)^{2}}.
		\end{split}
	\end{equation}
	Let
	\[
	A_1=A+\frac{(2 r\left(A\left(r-r_{0}\right)+\widetilde{C}_{0}\right)+2)\left(a^{2}-r^{2}\right)+8 r^{2}}{\left(a^{2}-r^{2}\right)^{2}}.
	\]	
 The inequality (\ref{ccc1}) can be rewritten as
	\begin{equation}\label{equ1}
		\begin{split}
			\bigg[ \varepsilon \frac{|\nabla(B \circ u)|^2}{(B \circ u)^2}-\frac{\Delta_V(B \circ u)}{B \circ u}\bigg]-(2-2 \varepsilon)\frac{2 r}{\left(a^2-r^2\right)} \frac{\nabla(B \circ u) \cdot \nabla r}{B \circ u}-A_1 \leq 0.
		\end{split}
	\end{equation}
Considering that 
$$ \Delta_{V}(B \circ u) \leq 	-2 e(u)+|\nabla B|^2+\|T\|_{\infty} e(u)= (\|T\|_{\infty}-1) e(u) ,$$
 we have 

 \begin{equation*}
	\begin{split}
	\varepsilon \frac{|\nabla(B \circ u)|^2}{(B \circ u)^2}-\frac{\Delta_V(B \circ u)}{B \circ u}\geq \varepsilon \frac{|\nabla(B \circ u)|^2}{(B \circ u)^2}-\epsilon\frac{(\|T\|_{\infty}-1) e(u)}{\left( B \circ u\right)^2 } \geq -\epsilon (\|T\|_{\infty}-2)  \phi^2.
	\end{split}
\end{equation*}
	By the property of Buseman function, we have
$$
\left|\nabla(B \circ u) \cdot \nabla r\right| \leq|\nabla(B \circ u)| \leq|\nabla u| .
$$ Thus \eqref{equ1} gives 
	\begin{equation}\label{}
	\begin{split}
(2-\|T\|_{\infty})\frac{1}{\left(a^2-r^2\right)^2}F^2-(2-2 \varepsilon)\frac{2 r}{\left(a^2-r^2\right)^2} F-A_1 \leq 0.
	\end{split}
\end{equation}
	Also, we may always assume that $B \geq 1$ since Busemann functions are horofunctions.
%
%
%
%
%
	That is also equivalent to
	\[
 (2-\|T\|_{\infty}) F^{2}-2(2-2 \varepsilon) r F-A_1\left(a^{2}-r^{2}\right)^{2} \leq 0.
	\]
	 This gives that 
	\begin{equation*}
		\begin{split}
			F\left(x^{*}\right) & \leq  \frac{2(2-2 \varepsilon) r}{ (2-\|T\|_{\infty})}+\left( \frac{A_1\left(a^{2}-r^{2}\right)^{2}}{ (2-\|T\|_{\infty})}\right)^{\frac{1}{2}}. 
		\end{split}
	\end{equation*}
	Then for any $x_{0} \in M$ and any $a>0$, we have
	\begin{equation*}
		\begin{split}
		\sup _{B_{\frac{a}{2}}\left(x_{0}\right)} \frac{|\nabla u|}{B \circ u} \leq& 	\frac{1}{a^2-r^2}	\bigg[\frac{2(2-2 \varepsilon) r}{ (2-\|T\|_{\infty})}+\left( \frac{A_1\left(a^{2}-r^{2}\right)^{2}}{ (2-\|T\|_{\infty})}\right)^{\frac{1}{2}}\bigg]\\
		\leq& C(m,n,T) \bigg(\frac{1}{a}+\sqrt{A+\frac{(2 r\left(A\left(r-r_{0}\right)+\widetilde{C}_{0}\right)+2)\left(a^{2}-r^{2}\right)+8 r^{2}}{\left(a^{2}-r^{2}\right)^{2}}}\bigg)\\
		&\leq C(m, n,T)\left(\sqrt{A}+\frac{1}{a}+\frac{1}{\sqrt{a}}\right).
		\end{split}
	\end{equation*}
	In particular, we have the following estimate at any point $x_{0} \in M$ ,
	\[
	\left|\nabla u\left(x_{0}\right)\right| \leq C(m, n)\left(\sqrt{A}+\frac{1}{a}+\frac{1}{\sqrt{a}}\right) d\left(u\left(x_{0}\right), c(0)\right),
	\]
	where $a>0$ is any number.
	
\end{proof}
%

Our second result is 

\begin{thm}\label{thm2}
	Let $\left(M^{m}, g\right)$ be a complete noncompact Riemannian manifold with
	\[
	\operatorname{Ric}_{V}:=Ric-\frac{1}{2} L_{V} g \geq-K,
	\]
	where $K \geq 0$ is a constant, Ric is the Ricci curvature of $M$ and $L_{V}$ is the Lie derivative. Let $\left(N^{n}, h\right)$ be a complete Riemannian manifold with sectional curvature bounded from above by $ -a^2 $. Let $u: M\rightarrow N$ be a $V T$-harmonic map such that $u(M) \subset B_c$ and $  e(u)\leq M_1 $, where $B_c$ is a horoball in $N$ with respect to some unit speed geodesic $ c $ . Suppose $\|V\|_{L^{\infty}}<+\infty,$ and 
	if $R^N$ and $ T $ satisfies that 
	\begin{equation*}
	\frac{1}{2\epsilon} |T(X^{\sharp},Y)|^{2}
		+\langle(\nabla T)(X^{\sharp} ,Y,Z),X\rangle \leq 0, \quad \text{and} \quad \|T\|_{\infty}<1.
	\end{equation*}
 where $ \epsilon $ is small positive constant and $ X,Y,Z $ is the vector field on $ N $. 	Then,  for any $ \tilde{p} \in M $, we have 
	\[
	\sup_{B_{\frac{R}{2}}(\tilde{p})} |\nabla u| \leq C_{6}(\sqrt{K}+\frac{1}{R}+\frac{1}{R^{\frac{1}{2}}}),
	\]
	where $C_{6}>0$ is a constant depending only on $m, n, \kappa, \tilde{R}, V, T$.
\end{thm}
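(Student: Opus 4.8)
The plan is to adapt the Bochner-type argument used in the first theorem, but now exploiting the uniform energy bound $e(u) \le M_1$ together with the heat-flow/time-dependent structure already encoded in Lemma \ref{bochner}. First I would recall from \eqref{66} and Kato's inequality that
\[
|\nabla u| \bigl(\Delta_V - \tfrac{\partial}{\partial t}\bigr)|\nabla u| \ge \bigl[(1-\epsilon)(1+\delta) - 1\bigr]\,\bigl|\nabla |\nabla u|\bigr|^2 - K|\nabla u|^2,
\]
where $\delta = 1/(2mn)$ and the negative curvature upper bound $-a^2$ on $N$ ensures the curvature term in \eqref{66} has a favorable sign, while the hypothesis on $T$ and $\nabla T$ kills the remaining bad terms exactly as in the first proof. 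The point of carrying the $\partial/\partial t$ term is that, since we want a pointwise estimate on a fixed ball $B_{R/2}(\tilde p)$ without assuming $M$ compact, it is cleaner to work with the auxiliary function $\phi = |\nabla u|/(B\circ u)$ and localize with a cutoff supported in $B_R(\tilde p)$, rather than the bare distance-squared weight.

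Next I would set $\phi = |\nabla u|/(B\circ u)$, assume $B \ge 1$, and compute $\Delta_V \phi$ in terms of $\Delta_V |\nabla u|$, $\Delta_V(B\circ u)$ and the cross term $\nabla(B\circ u)\cdot\nabla\phi$, absorbing the cross term by the same $\eta = 2\varepsilon_1$ trick from Shen's argument. The key composition inequality is
\[
\Delta_V (B\circ u) \le -2e(u) + |\nabla B|^2 + \|T\|_\infty e(u) = (\|T\|_\infty - 1)\,e(u),
\]
which uses $|\nabla B| = 1$, the convexity property of the Busemann function on a Cartan–Hadamard manifold, and the $VT$-harmonic map equation to handle the $du(V)$ and $\mathrm{Tr}_g T(du,du)$ contributions; the sign $\|T\|_\infty - 1 < 0$ is precisely why we assume $\|T\|_\infty < 1$. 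Feeding this into the differential inequality for $\phi$ and then choosing, as in the maximum-principle step of the first theorem, the test function $F = (R^2 - r^2)\phi$ on $B_R(\tilde p)$ with $r = \mathrm{dist}_g(\tilde p, \cdot)$, applying $\nabla F(x^*) = 0$, $\Delta_V F(x^*) \le 0$ at an interior maximum, and the Laplacian comparison bound $\Delta_V r^2 \le 2r(K(r-r_0) + \widetilde C_0) + 2$, yields a quadratic inequality $(2-\|T\|_\infty)F^2 - 2(2-2\varepsilon) r F - K_1(R^2 - r^2)^2 \le 0$ with $K_1 = K + O(1/R^2)$. Solving the quadratic and restricting to $B_{R/2}(\tilde p)$ gives $\sup_{B_{R/2}} |\nabla u|/(B\circ u) \le C(\sqrt K + 1/R + 1/\sqrt R)$; since $e(u)$, hence $|\nabla u|$, is bounded and we may normalize so that $B\circ u \ge 1$, this upgrades to the stated estimate on $|\nabla u|$ itself.

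The main obstacle I anticipate is not the maximum-principle bookkeeping, which is essentially identical to Theorem \ref{thm2} above, but rather justifying the composition inequality for $\Delta_V(B\circ u)$ with all the error terms controlled uniformly: one must verify that the $C^2$ regularity of the Busemann function $B$ (which holds on a Cartan–Hadamard manifold, per \cite{chen1998harmonic}) is enough to make $\mathrm{Hess}\,B \ge 0$ in the relevant sense, and that the $VT$-structure contributes exactly $\|T\|_\infty e(u)$ and no worse — this is where the $L^\infty$ bounds on $V$ and $T$ and the algebraic hypothesis on $\tfrac{1}{2\epsilon}|T(X^\sharp,Y)|^2 + \langle(\nabla T)(X^\sharp,Y,Z),X\rangle$ must be invoked carefully. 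A secondary technical point is the standard one of handling the possible non-smoothness of $r$ at the cut locus near $x^*$, which is dealt with by Calabi's trick (shifting the basepoint slightly), and the fact that the extra factor of $M_1^{1/2}$ or a dimensional constant may enter through Kato's inequality $\bigl|\nabla|\nabla u|\bigr|^2 \le (1+\delta)^{-1}|\nabla du|^2$; these should be absorbed into the final constant $C_6$.
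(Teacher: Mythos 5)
Your proposal proves this theorem by re-running Shen's argument \cite{shen1995liouville}: the ratio $\phi=|\nabla u|/(B\circ u)$, the weight $F=(R^2-r^2)\phi$, the interior maximum principle and a quadratic inequality in $F$. That is essentially a verbatim repetition of the paper's proof of its \emph{first} theorem, whereas the paper's proof of this second theorem is the announced second method, following \cite{qiu2017heat} and \cite{chen1998harmonic}: it works with the energy density rather than its square root, setting $A=e(u)/(1+B\circ u)^2$ and $b=1+B\circ u$, derives a differential inequality $\Delta_V A\ge -2(m-1)KA+2\varepsilon(2-\|T\|_\infty)A^2-(2-2\varepsilon)\frac{\nabla A\cdot\nabla b}{b}$ using the Hessian comparison bound $\Delta_V b\le -2e+|\nabla b|^2+\|T\|_\infty e(u)\le 0$, and then localizes with a smooth Li--Yau-type cutoff $\psi(\tilde\rho/R)$, applying the maximum principle to $G=\psi A$ and the quadratic formula. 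Working with $e(u)$ avoids Kato's inequality and the non-differentiability of $|\nabla u|$ at zeros of $du$, and the smooth cutoff replaces the bare $(R^2-r^2)$ weight; your route buys nothing beyond the first theorem, and it is internally inconsistent on two small points: your first paragraph advertises a cutoff ``rather than the distance-squared weight'' while your second paragraph uses exactly the distance-squared weight, and the $\partial/\partial t$ term you propose to carry is vacuous since $u$ is a fixed map, not a flow.

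The genuine gap is the final upgrade step. Your maximum-principle argument only bounds the ratio $|\nabla u|/(B\circ u)$ on $B_{R/2}(\tilde p)$; from the normalization $B\circ u\ge 1$ you obtain $|\nabla u|\le C(\sqrt K+1/R+1/\sqrt R)\,(B\circ u)$, which is an inequality in the unhelpful direction. To conclude the stated bound on $|\nabla u|$ itself you would need an \emph{upper} bound on $B\circ u$ over $B_{R/2}(\tilde p)$, and neither $B\circ u\ge 1$ nor the hypothesis $e(u)\le M_1$ provides one (the latter only gives a bound on $|\nabla u|$ by a constant depending on $M_1$, not the claimed $C_6(\sqrt K+1/R+1/\sqrt{R})$). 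So the sentence ``since $e(u)$ is bounded and we may normalize $B\circ u\ge 1$, this upgrades to the stated estimate'' is not a valid deduction. Admittedly the paper's own proof elides the same point --- it bounds $A=e(u)/(1+B\circ u)^2$ and then asserts the estimate for $|\nabla u|$ --- so your proposal reproduces, rather than repairs, this defect; but if you follow Shen's route you should state the conclusion for $|\nabla u|/(B\circ u)$ (or for $|\nabla u(x_0)|$ against $d(u(x_0),c(0))$, as in the first theorem), not claim it for $|\nabla u|$ outright.
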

\begin{proof} We use the method of \cite{qiu2017heat}\cite{chen1998harmonic}.  Without loss of generality, we assume $ a = 1 $.
	Let $ A=\frac{e(u)}{(1+B(u))^2}, b=1+B(u) $
$$
\Delta_V A=\frac{\Delta_V e}{b^2}-\frac{4 \nabla e \cdot \nabla_V b}{b^3}-\frac{2 e \Delta_V b}{b^3}+\frac{6 e|\nabla b|^2}{b^4},
$$
 this yields
$$
\Delta_V A \geqslant-2K A-\frac{2 A \Delta_V b}{b}+\left[\frac{|\nabla^2 u|^2}{b^2}-\frac{4 \nabla e \cdot \nabla b}{b^3}+\frac{6 e|\nabla b|^2}{b^4}\right] .
$$
The last term on the right-hand side(see  \cite[page 1785]{chen1998harmonic})
$$
\begin{aligned}
	{[\cdots]  }
	\geqslant &-(2-2 \varepsilon) \frac{\nabla A \cdot \nabla b}{b}+\frac{2 \varepsilon A|\nabla b|^2}{b^2},
\end{aligned}
$$
Hence,
\begin{equation}\label{equ2}
	\begin{split}
		\Delta_V A \geqslant-2\left[(m-1) K\right] A-(2-2 \varepsilon) \frac{\nabla A \cdot \nabla b}{b}+\frac{2 \varepsilon A|\nabla b|^2}{b^2}-\frac{2 A \Delta_V b}{b} \text {. }
	\end{split}
\end{equation}
Hessian comparison theorem gives 
$$
\begin{aligned}
	\Delta_V b &=\operatorname{Hess}(B)\left(u_* e_a, u_* e_\alpha\right)+\left\langle\nabla B, \tau_V\left(u^*\right)\right\rangle \\
	& \leqslant-2 e+|\nabla b|^2-\langle\nabla B,\operatorname{Tr}T_g(du,du)\rangle .\\
		& \leqslant-2 e+|\nabla b|^2+\|T\|_{\infty}e(u) .
\end{aligned}
$$
Denote $\mathcal{O}:=-2 e+|\nabla b|^2+\|T\|_{\infty}e(u)$. Then $\Delta b \leqslant \mathcal{O}$. Notice that $ \mathcal{O} \leq 0. $
Substituting this into (\ref{equ2}) and noticing $b>1$, we have
$$
\Delta_V A \geqslant-2 (m-1) K A-(2-2 \varepsilon) \frac{\nabla A \cdot \nabla b}{b}+\left(\frac{2 \varepsilon A|\nabla b|^2}{b^2}-\frac{2 A \mathcal{O}}{b}\right) .$$
Thus, we get 
$$
\frac{2 \varepsilon A|\nabla b|^2}{b^2}-\frac{2 A \mathcal{O}}{b}\geqslant \frac{2 \varepsilon A}{b^2}\left(|\nabla b|^2-\mathcal{O}\right) \geq 2\epsilon A^2(2-\|T\|_{\infty})  .
$$
Hence,
$$
\Delta_V A \geqslant-2(m-1) K A- 2(2-\|T\|_{\infty}) \varepsilon A^2 +(2-2 \varepsilon) \frac{\nabla A \cdot \nabla b}{b}.
$$
 By (2.6) in \cite{qiu2017heat}, we know that 
\begin{equation*}
	\begin{split}
		\Delta_{V} \tilde{\rho}^{2}=2 \tilde{\rho} \Delta_{V} \tilde{\rho}+2|\nabla \tilde{\rho}|^{2} \leq 2 r\left(K\left(\tilde{\rho}-r_{0}\right)+C_1\right)+2,
	\end{split}
\end{equation*}
  where $r_{0}>0$ is a sufficiently small constant and $C_1:=\max _{\partial B_{r_{0}}(\tilde{p})} \Delta_{V} r.$

  As in \cite{qiu2017heat},  	let $\psi(t) \in C^{2}([0,+\infty))$ such that

  	\begin{equation}	
  			\psi(t)=
  			\begin{cases}	
  					1&t \in\left[0, \frac{1}{2}\right]\\
  					0& t \in[1,+\infty)	
  				\end{cases}	
  		\end{equation}

  	$$ \psi(t) \in[0,1], \psi^{\prime}(t) \leq 0, \psi^{\prime \prime}(t) \geq-C_{3}   ,  \frac{\left|\psi^{\prime}(t)\right|^{2}}{\psi(t)} \leq C_{3}, $$  	
  	where $C_{3}$ is a positive constant. Set $\phi(x)=\psi\left(\frac{\tilde{\rho}(x)}{R}\right)$; then it is easy to see that (cf. \cite[ page 2275]{qiu2017heat} ) 
  	
  	$$ \Delta_{V} \phi=\frac{\psi^{\prime} \Delta_{V} \tilde{\rho}}{R}+\frac{\psi^{\prime \prime}|\nabla \tilde{\rho}|^{2}}{R^{2}} \geq -\frac{\sqrt{C_{3}}\left(A\left(R-r_{0}\right)+C_{1}\right)}{R}-\frac{C_{3}}{R^{2}} . $$


%
%


	Let $G(x)=\phi(x) A $. 
	If $G(x, t)$ achieves its maximum at $x_{1} \in B_{R}(\tilde{p})$ , then without loss of generality, we assume that $G\left(x_{1}\right)>0$; By the maximum principle, at $x_{1}$ we have
	
	\[ \nabla A=-\frac{\nabla \phi}{\phi} A, \quad \Delta_{V}(\phi A) \leq 0.  \]
	Therefore, at $x_{1}$, we have 
	\begin{align*}
			0 \geq &\left(\Delta_{V}\right)(\phi A)=A \Delta_{V} \phi+2\langle\nabla \phi, \nabla A\rangle+\phi\Delta_{V}A \\
			\geq &-\left(\frac{\sqrt{C_{3}}\left(K\left(R-r_{0}\right)+C_{1}\right)}{R}+\frac{C_{3}}{R^{2}}\right) A-2\frac{|\nabla \phi|^{2}}{\phi} A \\
			&+\phi\left(-2(m-1) K A+2(2-\|T\|_{\infty}) \varepsilon A^2 -(2-2 \varepsilon) \frac{\nabla A \cdot \nabla b}{b}\right) .
		\end{align*}	
Hence, we have 
\begin{equation*}
	\begin{split}
		0 \geqslant 2(2-\|T\|_{\infty}) \epsilon A^{2} \phi-2\left(CK+\frac{C_{7}}{R^{2}}+\frac{C_{7} \sqrt{K}}{R}\right) A \phi-C_{9} \frac{A \sqrt{A \phi}}{R},
	\end{split}
\end{equation*}	
	The quadratic formula implies that
	
	\[
	\sup_{B_{\frac{R}{2}}(\tilde{p})}|\nabla u| \leq C_{6}(\sqrt{K}+\frac{1}{R}+\frac{1}{R^{\frac{1}{2}}}),
	\]
	where $C_{6}>0$ is a constant depending only on $m, n, \kappa, \tilde{R}, V, T$.

\end{proof}

\end{document}